\def\mapright#1#2#3{\smash{\mathop{\hbox to
#3{\rightarrowfill}}\limits^{#1}_{#2}}}
\def\mapleft#1#2#3{\smash{\mathop{\hbox to
#3{\leftarrowfill}}\limits^{#1}_{#2}}}
\def\mapright#1#2{\smash{\mathop{\hbox to 0.90cm{\rightarrowfill}}\limits^{#1}_{#2}}}
\def\mapleft#1#2{\smash{\mathop{\hbox to 0.90cm{\leftarrowfill}}\limits^{#1}_{#2}}}
\def\mapleftright#1#2{\smash{\mathop{\hbox to 0.80cm{\leftarrowfill \rightarrowfill}}\limits^{#1}_{#2}}}
\title{A short proof of the equivalence of any Reidemeister oriented move 3
\footnote{2010 Mathematics Subject Classification:
57M27 (primary), 57M25 (secondary)}}
\author{S\'ostenes Lins}
\date{\today}
\date{\today}
\begin{document}

\maketitle

\begin{abstract}
In this note we present a short proof that the 4 oriented Reidemeister 
moves of type 2 together with any one of the 8 oriented Reidemeister moves of type 3 are
sufficient to imply the other 7.
\end{abstract}

\section{Introduction}

In a recent paper, Polyak presented a minimal set of
Reidemeister moves sufficient to generate all the others of types 1,2,3, 
\cite{polyak2010minimal}. Moves of type 1 are special, in the sense that
looking for invariants of 3-manifolds we do not want to impose invariance
under move of type 1.
This is because the 3-manifold given by surgery in a
framed link do change under such a move \cite{kauffman1994tlr}.
Therefore, it is convenient
to have a minimal set of regular isotopy 
moves disregarding completely moves of type 1.
The author is aware that the result is well known. However, the proof 
that follows is
so short that it deserves to be mentioned.

\section{Proof of equivalence}
A Reidemeister move of type 3 has a triangular 
region over which is put upside down when the 
move is applied. There are 16 configurations of
triangular regions involved in oriented type 3 moves. The boundary
of the triangular region is formed by three curves which is ordered
in the following way: first the curve whose immediate extension goes
up and up, second the curve whose immediate extension 
goes once up and once down, third the curve whose immediate
extension goes down and down.
These curves can deformed to become line segments.
A triangular configuration present in a link diagram 
is locally rotated so that the up-up segment becomes
horizontal and the boundary forms a letter $\Delta$ (not a $\nabla$).
We encode such a triangular region by a sequence of 3 arrows. The first arrow
is horizontal and can be from west to east or from east to west;
the second arrow has its direction coinciding with the 
second line segment and the third arrow has its direction coinciding
with the third line segment. The 16 triangular configurations are
encoded and named as follows:
\noindent
\begin{center}
$
\begin{array}{|ccc|}\hline
a^\uparrow&\hspace{-0.5mm}:=\hspace{-0.5mm}
&\rightarrow \nearrow\nwarrow\\ \hline 
a^\downarrow&\hspace{-0.5mm}:=\hspace{-0.5mm}
&\leftarrow \swarrow\searrow\\ \hline
\end{array}
\begin{array}{|ccc|}\hline
b^\uparrow&\hspace{-0.5mm}:=\hspace{-0.5mm}
&\rightarrow \nwarrow\nearrow\\ \hline
b^\downarrow&\hspace{-0.5mm}:=\hspace{-0.5mm}
&\leftarrow \searrow\swarrow\\ \hline
\end{array}
\begin{array}{|ccc|}\hline
c^\uparrow&\hspace{-0.5mm}:=\hspace{-0.5mm}
&\rightarrow \swarrow\nwarrow\\ \hline
c^\downarrow&\hspace{-0.5mm}:=\hspace{-0.5mm}
&\leftarrow \nearrow\searrow\\ \hline
\end{array}
\begin{array}{|ccc|}\hline
d^\uparrow&\hspace{-0.5mm}:=\hspace{-0.5mm}
&\rightarrow \searrow\nearrow\\ \hline
d^\downarrow&\hspace{-0.5mm}:=\hspace{-0.5mm}
&\leftarrow \nwarrow\swarrow\\ \hline
\end{array}
\begin{array}{|ccc|}\hline
A^\uparrow&\hspace{-0.5mm}:=\hspace{-0.5mm}
&\rightarrow \searrow\swarrow\\ \hline
A^\downarrow&\hspace{-0.5mm}:=\hspace{-0.5mm}
&\leftarrow \nwarrow\nearrow\\ \hline
\end{array}
\begin{array}{|ccc|}\hline
B^\uparrow&\hspace{-0.5mm}:=\hspace{-0.5mm}
&\rightarrow \nearrow\searrow\\ \hline
B^\downarrow&\hspace{-0.5mm}:=\hspace{-0.5mm}
&\leftarrow \swarrow\nwarrow\\ \hline
\end{array}
\begin{array}{|ccc|}\hline
C^\uparrow&\hspace{-0.5mm}:=\hspace{-0.5mm}
&\rightarrow \nwarrow\swarrow\\ \hline
C^\downarrow&\hspace{-0.5mm}:=\hspace{-0.5mm}
&\leftarrow \searrow\nearrow\\ \hline
\end{array}
\begin{array}{|ccc|}\hline
D^\uparrow&\hspace{-0.5mm}:=\hspace{-0.5mm}
&\rightarrow \swarrow\searrow\\ \hline
D^\downarrow&\hspace{-0.5mm}:=\hspace{-0.5mm}
&\leftarrow \nearrow\nwarrow\\ \hline
\end{array}
$
\end{center}

A property of the 3-arrow notation is that it defines the eight
oriented forms of Reidemeister moves of type 3: a move $x$ is the passage
from $x^\uparrow$ to $x^\downarrow$ (or vice-versa), where $x \in \{a,b,c,d,A,B,C,D\}$.
The three arrows are reversed in the move.
Another property is that we get the mirror of a configuration
interchanging lower and upper case of the same symbol in
$\{a,c,A,C\}$.
Similarly for the symbols in $\{b,d,B,D\}$ but for these in addiction
we must also interchange $x^\uparrow$ and $x^\downarrow$, 
see  Fig. \ref{fig:thetas}. A {\em consistent digon} is one whose 2 
crossings have distinct signs. 
We say that a strand that crosses the digon is {\em consistent}
if the crossings are up-up or down-down. A consistent digon crossed by a consistent strand
is named a $\Theta$-configuration. 

\begin{proposition}
 There are sixteen $\Theta$-configurations.
\end{proposition}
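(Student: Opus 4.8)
The plan is to read off the count directly from the definition, by enumerating the independent binary choices that determine a $\Theta$-configuration. Such a configuration consists of a \emph{consistent digon} together with a \emph{consistent} crossing strand, and it is specified by the over/under pattern and the orientations of the two strands bounding the digon, and by the over/under pattern and the orientation of the strand passing through it. The first task is to understand the over/under structure forced by the word ``consistent'' on the digon. Although two strands meeting twice admit four over/under patterns, I would show that the consistency condition---that the two crossings carry distinct signs---singles out exactly the two patterns in which one strand lies over the other at both crossings (the Reidemeister-2 type digon), as opposed to the two clasp patterns.

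This reduction rests on one short sign computation, which is the only place where geometry enters. Orienting both bounding strands upward and placing the bigon with its vertices one above the other, the ``one strand over'' pattern produces two crossings of opposite sign, whereas the clasp pattern produces two crossings of equal sign; since reversing the orientation of either strand reverses both crossing signs at once, this dichotomy is independent of the chosen orientations. Thus ``consistent digon'' is synonymous with ``Reidemeister-2 digon,'' its two vertices always carry opposite signs, and the only over/under freedom left in the digon is the choice of which arc is the over-arc. It remains to combine this with the orientations of the two bounding strands and with the two data of the crossing strand: its orientation, and whether it passes over both arcs or under both arcs (the ``up-up'' versus ``down-down'' alternative of the definition). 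Laid out as binary choices these give at most $2^5=32$ labelled local pictures; I would then reduce to $16$, the reduction coming either from the two consistency conditions tying some of the orientations together, or from the free involution given by the $180^\circ$ rotation that exchanges the two vertices of the digon (equivalently, by normalising to a standard position, exactly as the triangular configurations were normalised above).

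The delicate point---and the step I expect to be the main obstacle---is precisely this last bookkeeping: deciding which of the binary data are genuinely free and which are either forced by the two consistency conditions or identified by the normalising symmetry, so that the total is exactly $16$ rather than $32$ or $8$. To guard against an off-by-a-factor error I would cross-check the count against the sixteen triangular configurations $a^\uparrow,\dots,D^\downarrow$ already tabulated: each $\Theta$-configuration contains an oriented Reidemeister-3 triangle at each of its two vertices, and pushing the crossing strand across a vertex realises an oriented type-3 move, so reading off the triangle at one distinguished vertex assigns to every $\Theta$-configuration one of the sixteen symbols. Verifying that this assignment is a bijection both confirms that there are sixteen $\Theta$-configurations and sets up the correspondence used in the proof of the main equivalence. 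See Fig.~\ref{fig:thetas}.
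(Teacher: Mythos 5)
Your overall route is the paper's: put the configuration in a standard position and count independent binary choices. Your preliminary sign computation showing that a consistent digon is exactly a Reidemeister-2 digon (one arc passing over the other at both vertices, the two crossing signs then automatically opposite, independently of how the arcs are oriented) is correct, and it is in fact more careful than the paper, which uses this fact tacitly when it speaks of ``the upper strand of the digon.''

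However, you stop exactly at the step that does the work, and of the two mechanisms you offer for getting from $2^5=32$ labelled pictures down to $16$, only one is viable. The consistency conditions impose \emph{no} relations among the orientations: consistency of the digon is equivalent to the over/under pattern (R2 versus clasp) and, as your own computation shows, is unaffected by reversing either arc, while consistency of the crossing strand is a pure over/under condition (over both arcs or under both). So all four orientation patterns of the digon's arcs and both orientations of the crossing strand genuinely occur, and the whole reduction must come from the rotational normalization. That is precisely what the paper does: rotate so that the crossing strand is horizontal and points from east to west. This absorbs exactly the one remaining binary datum (the direction of the crossing strand); the $180^\circ$ rotation acts freely on the $32$ labelled pictures because it reverses that direction, and the surviving choices --- crossing strand over or under the digon ($2$), over-arc of the digon on the left or on the right ($2$), and the independent orientations of the two arcs ($4$) --- multiply to $2\times 2\times 4=16$ with no further identifications. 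Your proposed cross-check against the sixteen triangular configurations $a^\uparrow,\dots,D^\downarrow$ is sound and is essentially the content of Fig.~\ref{fig:thetas}, but as stated it is another deferred verification rather than a count, so it cannot substitute for settling the bookkeeping above.
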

\begin{proof}
By rotating we may suppose that the horizontal strand goes from east to west
in each $\Theta$-configuration. The horizontal strand can be $up$ or $down$.
The upper strand of the digon can be at the left or at the right. Finally,
the 4 directions of the two strands of the digon may occur independently. 
This yields $2 \times 2 \times 4=16$ $\Theta$-configurations.
\end{proof}

Suppose the two triangular regions of a $\Theta$-configuration
be labeled by $x^\uparrow$ (the upper triangle) and $y^\downarrow$
(the lower triangle). Then we say that
$(x^\uparrow,y^\downarrow)$ {\em are  $\Theta$-related}.
Fig. \ref{fig:thetas} shows that
if $(x^\uparrow,y^\downarrow)$ are  $\Theta$-related, then $(y^\uparrow,x^\downarrow)$ are  $\Theta$-related.

\begin{figure}[!h]
\begin{center}
\includegraphics[width=15cm]{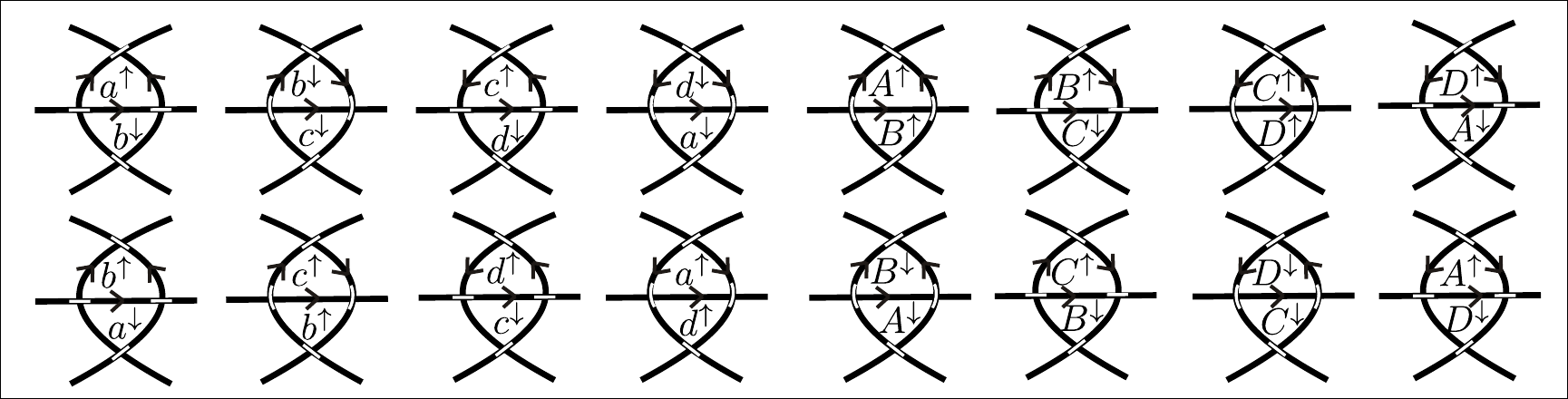} \\
\caption{\sf The sixteen $\Theta$-configurations}
\label{fig:thetas}
\end{center}
\end{figure}

For $x, y \in \{a,b,s,d,A,B,C,D\}$, we say that 
$y \Rightarrow_3 x$ if $x^\downarrow$ is obtained from $x^\uparrow$
by moves of type 2 and moves 
$y^\downarrow \to y^\uparrow$ and their inverses. 

\begin{lemma} If $(x^\uparrow,y^\downarrow)$ are  $\Theta$-related, then $y\Rightarrow_3 x$.
\label{lem:secondlemma}
\end{lemma}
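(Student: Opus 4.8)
The plan is to read off from the witnessing $\Theta$-configuration an explicit factorization of the move $x$ into moves of type 2 and a single use of $y^\downarrow\to y^\uparrow$. First I would fix the $\Theta$-configuration $W$ whose upper triangle is $x^\uparrow$ and whose lower triangle is $y^\downarrow$; it exists by hypothesis. Its defining data are a consistent digon, bounded by the two arcs $\alpha,\beta$ meeting at a top vertex and a bottom vertex of opposite signs, together with the consistent horizontal strand $H$ threading it. The reason for insisting that the digon be consistent is precisely that a bigon whose two crossings have opposite signs is exactly what a move of type 2 creates or cancels, \emph{once no strand passes through it}.

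The key observation I would isolate is what happens to $H$ when it is pushed out of the digon. Applying $x$ to the upper triangle slides $H$ across the top vertex and out of the digon above it; applying $y^\downarrow\to y^\uparrow$ to the lower triangle slides $H$ across the bottom vertex and out of it below. In both cases the two digon crossings are untouched, so the digon survives intact and consistent, and therefore can now be cancelled by a move of type 2. Cancelling it in the two cases yields \emph{the same} local picture: the arcs $\alpha,\beta$ no longer crossing each other, with $H$ crossing each of them carrying exactly the over/under pattern and the orientations it had before, since an oriented type 3 move preserves all of these.

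Reading this coincidence as an identity of diagram maps gives the factorization I want: performing $x$ produces, after cancelling the freed digon, the same diagram as performing $y^\downarrow\to y^\uparrow$ and cancelling the freed digon, so $x$ equals $y^\downarrow\to y^\uparrow$ followed by cancelling the digon on one side of $H$ and reinstalling it on the other side, that is, two moves of type 2. Transcribed to the triangular regions this is a passage from $x^\uparrow$ to $x^\downarrow$ effected by moves of type 2 together with one $y^\downarrow\to y^\uparrow$, which is the definition of $y\Rightarrow_3 x$. Here the symmetry recorded just before the statement, that $(x^\uparrow,y^\downarrow)$ $\Theta$-related forces $(y^\uparrow,x^\downarrow)$ $\Theta$-related, is what I would invoke to certify that the reinstalled digon closes the picture up to exactly $x^\downarrow$ rather than to some unrelated one of the sixteen triangles.

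The hard part will be the sign-and-orientation bookkeeping rather than the geometry. I must verify that after $H$ is slid out the remaining digon is still consistent, so that the type 2 cancellation is genuinely legal, and that the over/under relations and the three orientations surviving the two type 2 moves reassemble into $x^\downarrow$ and not into another triangle of the table. I expect the involution of Fig.~\ref{fig:thetas} to cut the sixteen $\Theta$-configurations down to a handful of orbits, so that a single verification per orbit will suffice to finish the argument.
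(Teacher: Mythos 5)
Your factorization is the paper's own proof: the move $x^\uparrow\to x^\downarrow$ is realized by one complexifying move of type 2 that installs the consistent digon on the far side of the horizontal strand, one application of $y^\downarrow\to y^\uparrow$ on the resulting lower triangle, and one simplifying move of type 2 that cancels the digon once it has been freed --- exactly the two type 2 moves plus one $y$-move you describe, merely narrated starting from the bare $x^\uparrow$ rather than from the ambient $\Theta$-configuration. The bookkeeping you defer to the end is automatic rather than hard: moves of type 2 and 3 never alter the sign or the over/under data of the surviving crossings, so the digon stays cancellable and the residual triangle can only be $x^\downarrow$, with no need to invoke the involution of Fig.~\ref{fig:thetas} (that symmetry is used only for the corollary).
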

\begin{proof}
The $x^\uparrow \to x^\downarrow$ move can be factored by a
complexifying type 2 move, the move $y^\downarrow \to y^\uparrow$ 
and a simplifying move of type2, see Fig. \ref{fig:yimpliesx}.
This establishes the result.
\end{proof}

\begin{figure}[!h]
\begin{center}
\includegraphics[width=9cm]{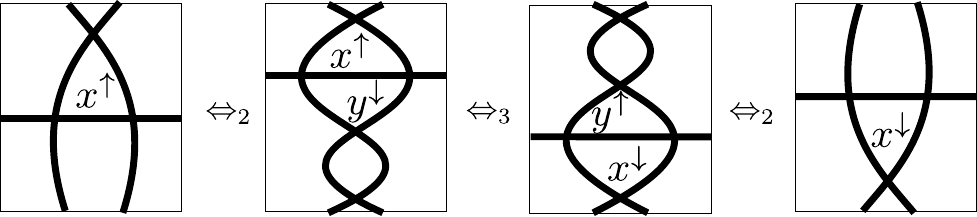} \\
\caption{\sf Proof that $y \Rightarrow_3 x$}
\label{fig:yimpliesx}
\end{center}
\end{figure}

\begin{corollary} If $(x^\uparrow,y^\downarrow)$ are  $\Theta$-related, 
then $y\Leftrightarrow_3 x$.
\end{corollary}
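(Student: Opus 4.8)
The plan is to unpack the notation $y \Leftrightarrow_3 x$ as the conjunction of the two implications $y \Rightarrow_3 x$ and $x \Rightarrow_3 y$, and then to obtain each of them from Lemma~\ref{lem:secondlemma}. One of the two is already in hand: since $(x^\uparrow, y^\downarrow)$ are $\Theta$-related, the lemma immediately yields $y \Rightarrow_3 x$, so nothing further is needed for that half.

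For the reverse implication $x \Rightarrow_3 y$ I would invoke the symmetry of $\Theta$-relatedness recorded just before the lemma, namely that $(x^\uparrow, y^\downarrow)$ being $\Theta$-related forces $(y^\uparrow, x^\downarrow)$ to be $\Theta$-related as well (this is the content of Fig.~\ref{fig:thetas}, where exchanging the upper and lower triangles of a fixed $\Theta$-configuration interchanges the two labels together with their $\uparrow/\downarrow$ decorations). Feeding the pair $(y^\uparrow, x^\downarrow)$ into Lemma~\ref{lem:secondlemma} --- now with $y$ in the role previously played by $x$ and $x$ in the role of $y$ --- produces exactly $x \Rightarrow_3 y$. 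Combining the two implications gives $y \Leftrightarrow_3 x$.

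The step that deserves the most care is the bookkeeping of which symbol plays which role when the lemma is applied the second time. The lemma is stated for a generic $\Theta$-related pair whose first entry carries the $\uparrow$ and whose second carries the $\downarrow$; the only thing that makes the reverse direction work is that the symmetry produces a genuinely $\Theta$-related pair with the decorations in precisely that order, so that the hypothesis of the lemma is literally satisfied and no fresh geometric argument is required. I expect no serious obstacle beyond verifying that this role reversal is legitimate, which is exactly what the symmetry statement guarantees.
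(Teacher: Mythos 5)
Your proposal is correct and follows exactly the paper's own argument: one application of Lemma~\ref{lem:secondlemma} gives $y \Rightarrow_3 x$, the symmetry of $\Theta$-relatedness from Fig.~\ref{fig:thetas} turns $(x^\uparrow,y^\downarrow)$ into the $\Theta$-related pair $(y^\uparrow,x^\downarrow)$, and a second application of the lemma with the roles swapped gives $x \Rightarrow_3 y$. Your extra care about the role reversal is sound and matches what the paper implicitly relies on.
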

\begin{proof}
  By Lemma \ref{lem:secondlemma}, $y\Rightarrow_3 x$. 
If $(x^\uparrow,y^\downarrow)$ are  $\Theta$-related, then $(y^\uparrow,x^\downarrow)$ are  $\Theta$-related
by Fig. \ref{fig:thetas}.
As $(y^\uparrow,x^\downarrow)$ are  $\Theta$-related, by Lemma 
\ref{lem:secondlemma}, $x \Rightarrow_3 y$. In this way,
 $y\Leftrightarrow_3 x$.
\end{proof}

At this point, in face of the above corollary and of 
Fig. \ref{fig:thetas} we have
$a \Leftrightarrow b \Leftrightarrow c \Leftrightarrow d$ and 
$A \Leftrightarrow B \Leftrightarrow C \Leftrightarrow D$.

\begin{figure}[!h]
\begin{center}
\includegraphics[width=7cm]{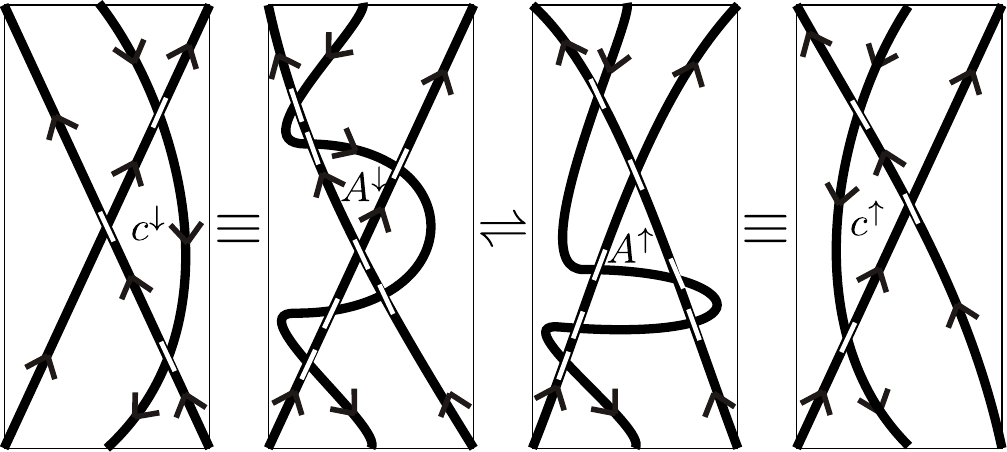} \\
\caption{\sf Proof that $A \Rightarrow c$ and of 
its mirrow, that $a \Rightarrow C$}
\label{fig:Aimpliesc}
\end{center}
\end{figure}

\begin{theorem}
 The set of 4 oriented Reidemeister moves of type 2 together with 
any one of the 8 oriented
 Reidemeister moves of type 3 implies the other 7.
\end{theorem}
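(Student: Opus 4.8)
The plan is to leverage the two facts already assembled immediately before the statement. By the Corollary together with Figure~\ref{fig:thetas}, $\Theta$-relatedness yields the two chains
\[
a \Leftrightarrow_3 b \Leftrightarrow_3 c \Leftrightarrow_3 d
\quad\text{and}\quad
A \Leftrightarrow_3 B \Leftrightarrow_3 C \Leftrightarrow_3 D .
\]
Since $\Rightarrow_3$ is transitive — if the $y$-move is realized by type~2 moves plus the $z$-move, any realization of the $x$-move that invokes $y$ can have each $y$ replaced, giving $z \Rightarrow_3 x$ — the relation $\Leftrightarrow_3$ is an equivalence, and these chains show that the eight oriented type~3 moves fall into at most two classes: the lowercase class $\{a,b,c,d\}$ and the uppercase class $\{A,B,C,D\}$, with every move in a class generating the other three modulo the four type~2 moves. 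Thus the theorem reduces to a single missing link: that a move from one class, together with the type~2 moves, can manufacture a move from the other class.

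First I would note that the $\Theta$-machinery of Lemma~\ref{lem:secondlemma} cannot by itself supply this link: every $\Theta$-configuration pairs two triangles of the same case, so it never crosses between the lowercase and uppercase classes. The bridge must therefore come from a separate, explicit construction, and that is precisely what Figure~\ref{fig:Aimpliesc} provides. Structurally it is again a factorization of the form ``complexify, perform one type~3 move, simplify'': the move $c^\uparrow \to c^\downarrow$ is factored through a complexifying type~2 move, the move $A^\downarrow \to A^\uparrow$, and a simplifying type~2 move, yielding $A \Rightarrow_3 c$. Taking the mirror of the entire picture and applying the mirror rule on $\{a,c,A,C\}$ recorded just after the arrow table converts this into $a \Rightarrow_3 C$.

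With both cross-links in hand the two classes fuse, and I would finish by a short case check. Given any single type~3 move $x$, the within-class equivalences let me reach a representative of its class. If $x$ is lowercase I pass to $a$, invoke $a \Rightarrow_3 C$ to enter the uppercase class, and then recover all of $A,B,C,D$ by the uppercase chain, while $a,b,c,d$ are already available; if $x$ is uppercase I pass to $A$, invoke $A \Rightarrow_3 c$, and recover all of $a,b,c,d$ by the lowercase chain. Either way all eight oriented type~3 moves become available, which is the assertion. I expect the only genuine obstacle to be the verification of the single cross-implication in Figure~\ref{fig:Aimpliesc}: one must confirm that the diagram produced by the complexifying type~2 move really presents an $A$-triangle, positioned so that after the $A^\downarrow \to A^\uparrow$ move the simplifying type~2 move returns exactly the $c^\downarrow$ configuration. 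Everything outside this one picture is then formal, following from the symmetry and transitivity of $\Leftrightarrow_3$ and the mirror symmetry of the notation.
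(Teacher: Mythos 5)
Your proposal is correct and follows essentially the same route as the paper: the within-class equivalences $a \Leftrightarrow_3 b \Leftrightarrow_3 c \Leftrightarrow_3 d$ and $A \Leftrightarrow_3 B \Leftrightarrow_3 C \Leftrightarrow_3 D$ from the Corollary and Fig.~\ref{fig:thetas}, fused by the cross-link $A \Rightarrow_3 c$ of Fig.~\ref{fig:Aimpliesc} and its mirror $a \Rightarrow_3 C$. You merely make explicit what the paper leaves implicit (transitivity of $\Rightarrow_3$, the fact that $\Theta$-relatedness never crosses between the two classes, and the final case check), and you correctly identify the one nontrivial verification as the single diagram in Fig.~\ref{fig:Aimpliesc}.
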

\begin{proof}
 In Fig. \ref{fig:Aimpliesc} we prove that  $A \Rightarrow c$. 
Taking the mirrow of the moves involved
 we get that  $a \Rightarrow C$. Thus, the 8 
oriented Reidemeister moves of type 3 are equivalent given 
 that moves of type 2 are available. 
\end{proof}

\bibliographystyle{plain}
\bibliography{bibtexIndex.bib}

\begin{thebibliography}{1}

\bibitem{kauffman1994tlr}
L.H. Kauffman and S.~Lins.
\newblock {Temperley-Lieb Recoupling Theory and Invariants of 3-manifolds}.
\newblock {\em Annals of Mathematical Studies, Princeton University Press},
  134:1--296, 1994.

\bibitem{polyak2010minimal}
M.~Polyak.
\newblock Minimal generating sets of {R}eidemeister moves.
\newblock {\em Quantum Topology}, 1:399--411, 2010.

\end{thebibliography}

\end{document}